\definecolor{poly1}{RGB}{100,100,200}
\definecolor{poly1dark}{RGB}{40,40,180}
\definecolor{poly2}{RGB}{240,160,160}
\definecolor{poly2dark}{RGB}{180,40,40}
\newtheorem{assumption}[theorem]{Assumption}
\title{Constructing {a subgradient} from directional derivatives for
  functions of two variables}
\shorttitle{Constructing a subgradient from directional derivatives}
\author{Kamil A. Khan\thanks{Corresponding author.\newline K.A.~Khan and
		Y.~Yuan are with the Department of Chemical Engineering, McMaster University,
		Hamilton, ON, Canada. Email:~kamilkhan@mcmaster.ca. } \and Yingwei Yuan}
\begin{document}

\maketitle

\begin{abstract}
      For any {scalar-valued} bivariate  function that is locally Lipschitz continuous and
    directionally differentiable, it is shown that {a
    	subgradient} may
    always be constructed from the function's directional derivatives in
    the four compass directions, arranged in a so-called ``compass
    difference''.  When the original function is nonconvex, the obtained
    subgradient is an element of Clarke's generalized gradient, but the
    result appears to be novel even for convex functions. The
    function is not required to be represented in any particular form,
    and no further assumptions are required, though the result is
    strengthened when the function is additionally L-smooth in the sense
    of Nesterov. For certain optimal-value functions and certain
    parametric solutions of differential equation systems, these new
    results appear to provide the only known way to compute
    {a subgradient}. These results also imply that centered finite
    differences {will converge to a subgradient} for bivariate nonsmooth
    functions. As a dual result, we find that any compact convex
    set in two dimensions contains the midpoint of its interval
    hull. Examples are included for illustration, and it is {demonstrated} that
    these results do not extend directly to functions of more than two
    variables or sets in higher dimensions.
\end{abstract}

\emph{Keywords:} subgradients, directional derivatives, support
functions, Clarke's generalized gradient, interval hull

\section{Introduction}
\label{sec:introduction}

Subgradient methods~\cite{Shor,nonsmoothBook} and bundle
methods~\cite{Kiwiel,Bundle2,Luksan,nonsmoothBook} for nonsmooth
optimization {typically use a subgradient at each iteration} to provide local sensitivity information
that is {ultimately} useful enough to {infer} descent. For convex problems, these
subgradients are elements of the convex subdifferential; for nonconvex
problems, the subgradients must typically be elements of either
Clarke's generalized gradient~\cite{Clarke} or other established
generalized
subdifferentials~\cite{Nesterov,Mordukhovich,Kruger}. Evaluating
{a subgradient} directly, however, may be a challenging task; this
difficulty has motivated the development of numerous subdifferential
approximations~\cite{nonsmoothBook,Facchinei,PangStewart}.

Nevertheless, there are several settings in which evaluating
directional derivatives is much simpler than evaluating {a subgradient}
using established methods. For finite compositions of simple smooth
and nonsmooth functions, directional derivatives may be evaluated
efficiently~\cite{GriewankNonsmooth} by extending the standard
forward/tangent mode of algorithmic differentiation~\cite{Griewank},
while extensions to efficient subgradient evaluation methods require
more care~\cite{KhanBartonFwdAD,KhanBranch}. Directional derivatives
of implicit functions and inverse functions may be obtained by solving
auxiliary equation systems~\cite{Scholtes}, whereas subgradient
results in this setting {assume either} special
structure~\cite{Scholtes,KhanBartonHybrid} or a series of recursive
equation-solves~\cite{KhanBartonHybrid}.  For solutions of parametric
ordinary differential equations (ODEs) with nonsmooth right-hand
sides, directional derivatives may be evaluated by solving an
auxiliary ODE system~\cite[Theorem~7]{PangStewart} using a standard
ODE solver, whereas the only general method for subgradient evaluation
involves solving a series of ODEs punctuated by discrete jumps that
must be handled carefully~\cite{KhanBartonPlenary,KhanThesis}.  In
parametric optimization, Danskin's classical
result~\cite{Danskin,Hogan} describes directional derivatives for
optimal-value functions as the solutions of related optimization
problems in a general setting, while subgradient results such as
\cite[Theorem~5.1]{StechlinskiNLP} {tend to additionally} require unique solutions
for the embedded optimization problem.

Moreover, directional derivatives and subdifferentials of convex
functions are essentially duals~\cite{HiriartUrrutyLemarechal}. Hence,
this article examines the question of whether, given a
directional-derivative evaluation oracle for a function and little
else, this oracle may be used to compute {a
	subgradient at each iteration of a typical nonsmooth optimization method}. This is clearly true for univariate
functions, for example; in this case, the entire subdifferential may
be constructed from directional derivatives in the positive
and negative directions.

To address this question, this article defines a function's \emph{compass differences} to be
vectors obtained by arranging directional derivatives in the
coordinate directions and negative coordinate directions in a certain
way. Thus, for a bivariate function, a compass difference involves
directional derivatives in the four compass directions. For a bivariate
function that is locally Lipschitz continuous and directionally
differentiable, it is shown  that {the compass difference at
	any domain point is
	a subgradient}, with {this subgradient} understood to
be {an element of Clarke's generalized gradient} in the nonconvex case. Surprisingly, while this result
is 
simple to state, it appears to be previously unknown even for convex
functions, and does not require any additional assumptions. It is also
shown that this result does not extend directly to functions of more
than two variables.  As a related
result, this article shows that a compact convex set in $\reals^2$ must
always contain the midpoint of its interval hull, though this does not
extend directly to sets in $\reals^n$ for $n>2$. Hence, four calls to
a directional-derivative evaluation oracle are sufficient to compute a
subgradient for a nonsmooth bivariate function, and centered finite
differences for these functions are useful approximations of
{a subgradient}. In several cases, the approach of this article appears
to be  the only way known thus far to evaluate {a subgradient} correctly.

Audet and Hare~\cite{AudetHare} studied a similar problem involving
the similar setup, in the field of geometric
probing~\cite{Skiena}. {Unlike our work, Audet and Hare additionally
	assume that: (a) their oracle $\mathsf{D}$ is convex (as a
	set's support function), (b) the bivariate function's regular
	subdifferential is polyhedral, and (c) the 
	oracle $\mathsf{D}$ evaluates the function's directional derivative\footnote{{We
			note briefly
			that the piecewise-linear
			function $f:x\in\reals^2\mapsto\max(0,\min(x_1,x_2))$ has a
			polyhedral regular subdifferential at $\bar{x}\coloneqq (0,0)$, but here the
			oracle $\mathsf{D}$ of [1] is distinct from both the directional
			derivative $d\mapsto f'(\bar{x};d)$
			(c.f. \cref{def:dirDeriv} below) and Clarke's generalized
			directional derivative $d\mapsto f^\circ(\bar{x};d)$
			(from~\cite{Clarke}). In this case $d\mapsto f'(\bar{x};d)$ is
			nonconvex, so (a) and (c) cannot be satisfied simultaneously even if
			$\mathsf{D}$ is redefined.}}.  These
	assumptions are evidently satisfied, for example, by any function that
	is both convex and piecewise-differentiable in the sense of Scholtes~\cite{Scholtes}. Under these
	assumptions, Audet and Hare present} a method to use finitely many
{directional derivative evaluations} to construct the whole {regular} subdifferential at a
given domain point. This method proceeds by
deducing each vertex of the subdifferential, and depends heavily on
the assumption of subdifferential polyhedrality; its complexity scales
linearly with the number of subdifferential vertices. It is readily
verified, for example, that their {Algorithm~1} will run
forever without locating any subgradients when applied to the convex
Euclidean norm function:
\[
f:\reals^2\to\reals: x\mapsto \sqrt{x_1^2+x_2^2}
\]
at $x=0$. Indeed, their algorithm is not intended to work in this case.
Unlike the work of~\cite{AudetHare}, we do
not assume that subdifferentials are polyhedral, { do not
	require the subdifferential's support function to be available in
	the nonconvex case, and do not
	assume directional derivatives to be convex with respect to direction}. Our goal is 
only to identify one subgradient rather than {a} whole subdifferential;
characterizing {a} whole subdifferential in closed form may be difficult or
impossible when we {do not} know \emph{a priori} that it is
polyhedral. As mentioned above, in the nonconvex case, we
evaluate an element of Clarke's generalized
gradient~\cite{Clarke} instead of a subgradient.

The remainder of this article is structured as follows.
\cref{sec:background} summarizes relevant established
constructions in nonsmooth analysis, \cref{sec:subgradient}
defines compass differences in terms of directional derivatives  and shows that they are valid
subgradients, and \cref{sec:examples} presents several examples
for illustration.

\section{Mathematical background}
\label{sec:background}

The Euclidean norm $\|\cdot\|$ and inner product
$\innerProd{\cdot}{\cdot}$ are used throughout this article. The $\supth{i}$ unit
coordinate vector in $\reals^n$ is denoted as
$\vCoord{i}$, and components of vectors are indicated using
subscripts, e.g.~$x_i\coloneqq \innerProd{\vCoord{i}}{x}$.
The convex hull and the closure of a set $S\subset\reals^n$ are
denoted as $\convHull{S}$ and $\closure{S}$, respectively.

\subsection{Directional derivatives and convex subgradients}

\begin{definition}
	\label{def:dirDeriv}
	Consider an open set $X\subset\reals^n$ and a function
	$f:X\to\reals$. The following limit, if it exists, is the
	\emph{(one-sided) directional derivative} of $f$ at $x\in X$ in the
	direction $d\in\reals^n$:
	\[
	\dd{f}{x}{d} \coloneqq  \lim_{t\downTo{0}}\frac{f(x+td) - f(x)}{t}.
	\]
	If $\dd{f}{x}{d}$ exists in $\reals$ for each $d\in\reals^n$, then
	$f$ is \emph{directionally differentiable} at $x$.
\end{definition}

This article primarily considers situations where directional
derivatives are available via a black-box oracle. For example, this oracle could
represent symbolic calculation, the situation-specific
directional derivatives described in
\cref{sec:introduction}, algorithmic
differentiation~\cite{Griewank}, or even finite difference
approximation if some error is tolerable.

The primary goal of this article is to use directional derivatives to
evaluate {a subgradient}, defined for convex functions as follows, and
generalized to nonconvex functions as in \cref{sec:nonsmooth}
below. {Individual subgradients} are used {at each
	iteration of} subgradient methods for convex
minimization~\cite{Shor} and bundle methods for nonconvex
minimization~\cite{HiriartUrrutyLemarechal2}. They are also used to
build useful affine outer approximations for nonconvex
sets~\cite{Rote,KhanSubtangent}. In each of these applications, only a
single subgradient is needed at each {visited} domain point.

\begin{definition}
	Given a convex set $X\subset\reals^n$ and a convex function
	$f:X\to\reals$, $s\in\reals^n$ is a \emph{subgradient} of $f$ at
	$x\in X$ if
	\begin{equation}
	\label{eq:subtangent}
	f(y)\geq f(x) + \innerProd{s}{y-x},
	\qquad\forall y\in X.
	\end{equation}
	The set of all subgradients of $f$ at $x$ is the
	\emph{(convex) subdifferential} $\genJac{f}(x)$.
\end{definition}
In this definition, if $X$ is open, then $\genJac{f}(x)$ is convex, compact, and
nonempty~\cite{HiriartUrrutyLemarechal}. The directional derivative and
subdifferentials of $f$ at $x$ are related as follows~\cite{HiriartUrrutyLemarechal}. For each $d\in\reals^n$,
\begin{equation}
\label{eq:ddAndSub}
\dd{f}{x}{d} = \max\{\innerProd{s}{d}:\,\,s\in\genJac{f}(x)\}
\quad\text{and}\quad
\genJac{f}(x) = \{s\in\reals^n:
\dd{f}{x}{d}\geq\innerProd{s}{d},\,\,\forall d\in\reals^n\}.
\end{equation}
Thus, the subdifferential characterizes the local behavior of
convex functions via~\eqref{eq:ddAndSub}, and characterizes the global
behavior of convex functions via~\eqref{eq:subtangent}. Moreover,
\eqref{eq:ddAndSub} shows that
directional derivatives and subgradients of convex functions are
essentially duals of each other.

\subsection{Nonsmooth analysis}
\label{sec:nonsmooth}

The following constructions by Clarke~\cite{Clarke} extend certain subgradient
properties to nonconvex functions, and are used in methods for
equation-solving~\cite{Qi,LPNewton} and
optimization~\cite{Shor,Kiwiel,MakelaBundle}.

\begin{definition}
	\label{def:Clarke}
	Consider an open set $X\subset\reals^n$ and a locally Lipschitz
	continuous function $f:X\to\reals$. The \emph{(Clarke--)generalized
		directional derivative} of $f$ at $x\in X$ in the direction
	$d\in\reals^n$ is:
	\[
	f^\circ(x;d)
	\coloneqq \limsup_{\substack{y\to x \\ t\downTo{0}}}\frac{f(y+td) - f(y)}{t}.
	\]
	\emph{Clarke's generalized gradient} of $f$ at $x$ is then:
	\[
	\genJac{f}(x)\coloneqq 
	\{s\in\reals^n: 
	f^\circ(x;d)\geq\innerProd{s}{d}, \quad\forall d\in\reals^n\}.
	\]
	Elements of Clarke's generalized gradient will be called
	\emph{Clarke subgradients}.
\end{definition}

{With $f$ as in the above definition, and for any $x\in X$, $\genJac{f}(x)$} is guaranteed to be nonempty, convex,
and compact in $\reals^n$. As suggested by its notation, Clarke's generalized gradient does
indeed coincide with the convex subdifferential when $f$ is
convex~\cite{Clarke}. When $f$ is nonconvex, \eqref{eq:ddAndSub} is
no longer guaranteed to hold with Clarke's generalized gradient in place of
the convex subdifferential. The following result for univariate
functions is easily demonstrated, {and is summarized in~\cite{AudetHare}}.

\begin{proposition}
	Consider an open set $X\subset\reals$ and a univariate function
	$f:X\to\reals$ that is locally
	Lipschitz continuous and directionally differentiable. For each $x\in X$,
	\begin{equation}
	\label{eq:ddUnivariate}
	\genJac{f}(x)=\convHull\{\dd{f}{x}{1},-\dd{f}{x}{-1}\}.
	\end{equation}
\end{proposition}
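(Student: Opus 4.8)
The plan is to compute $\genJac{f}(x)$ directly from \cref{def:Clarke}, exploiting that the domain is one-dimensional. Since $d\mapsto f^\circ(x;d)$ is positively homogeneous, the defining inequalities $f^\circ(x;d)\ge\innerProd{s}{d}$ over all $d\in\reals$ collapse to the two obtained at $d=1$ and $d=-1$, so that $\genJac{f}(x)=[-f^\circ(x;-1),\,f^\circ(x;1)]$ --- which incidentally re-derives that this set is a nonempty compact interval. As $\convHull\{\dd{f}{x}{1},-\dd{f}{x}{-1}\}$ is likewise the closed interval with endpoints $\dd{f}{x}{1}$ and $-\dd{f}{x}{-1}$, it is enough to prove the scalar identity
\[
f^\circ(x;1)=\max\{\dd{f}{x}{1},\,-\dd{f}{x}{-1}\}
\]
together with its reflection $-f^\circ(x;-1)=\min\{\dd{f}{x}{1},-\dd{f}{x}{-1}\}$; and since applying the displayed identity to $y\mapsto f(2x-y)$ yields the reflected one, it suffices to establish the displayed identity alone.

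The inequality ``$\ge$'' is immediate from the definition of $f^\circ(x;1)$ as a $\limsup$ over $(y,t)\to(x,0^+)$: the approach $y\equiv x$ gives $f^\circ(x;1)\ge\dd{f}{x}{1}$ by \cref{def:dirDeriv}, and the approach $y=x-t$, $t\downTo{0}$, gives $f^\circ(x;1)\ge\lim_{t\downTo{0}}\tfrac1t\bigl(f(x)-f(x-t)\bigr)=-\dd{f}{x}{-1}$. This already yields the inclusion $\convHull\{\dd{f}{x}{1},-\dd{f}{x}{-1}\}\subseteq\genJac{f}(x)$ (with the symmetric bounds on $f^\circ(x;-1)$ supplying the other endpoint), and in particular shows the midpoint $\tfrac12\bigl(\dd{f}{x}{1}-\dd{f}{x}{-1}\bigr)$ to be a Clarke subgradient --- a one-dimensional precursor of the paper's main result.

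The reverse inequality ``$\le$'' is the crux. I would take sequences $y_k\to x$ and $t_k\downTo{0}$ realizing the $\limsup$ and, after passing to a subsequence, separate the configuration in which $x\in[y_k,y_k+t_k]$ from the one in which it does not. In the first configuration one splits each interval at $x$ and writes $\tfrac1{t_k}\bigl(f(y_k+t_k)-f(y_k)\bigr)$ as a (possibly trivial) convex combination of an $x$-anchored right secant slope and an $x$-anchored left secant slope, which tend to $\dd{f}{x}{1}$ and $-\dd{f}{x}{-1}$ respectively by \cref{def:dirDeriv}; that configuration is therefore harmless. The main obstacle is the second configuration, in which $[y_k,y_k+t_k]$ lies strictly to one side of $x$ --- especially when $|y_k-x|$ is large relative to $t_k$ --- since then the interval cannot be anchored at $x$ and one must show directly that such off-centre secant slopes of $f$ do not exceed the one-sided derivative of $f$ at $x$ in the limit. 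This is exactly where the local Lipschitz continuity and directional differentiability hypotheses must be put to real work: \emph{a priori}, secant slopes and one-sided derivatives of $f$ at points near $x$ need not settle down to the one-sided derivative at $x$, so the needed estimate has to be extracted with care rather than read off from the definitions, and this is the step I expect to demand the most attention.
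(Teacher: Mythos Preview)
The paper offers no proof of this proposition; it calls the result ``easily demonstrated'' and attributes it to~\cite{AudetHare}, so there is no argument to compare against.

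More to the point, the equality~\eqref{eq:ddUnivariate} is \emph{false} under the stated hypotheses, and the obstacle you flagged in the ``$\le$'' direction is not merely demanding --- it is insurmountable. Take $f:\reals\to\reals$ with $f(0)=0$ and $f(x)=x^2\sin(1/x)$ for $x\neq 0$. This $f$ is everywhere differentiable (hence directionally differentiable) and locally Lipschitz near $0$, since $|f'(x)|=|2x\sin(1/x)-\cos(1/x)|\le 2|x|+1$. Here $\dd{f}{0}{1}=\dd{f}{0}{-1}=0$, so $\convHull\{\dd{f}{0}{1},-\dd{f}{0}{-1}\}=\{0\}$. Yet along $x_k=1/(k\pi)\to 0$ one has $f'(x_k)=-\cos(k\pi)=(-1)^{k+1}$, so Clarke's gradient-limit characterization gives $\genJac{f}(0)=[-1,1]\neq\{0\}$. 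In your framework this is exactly the ``second configuration'': intervals $[y_k,y_k+t_k]$ lying entirely to one side of $0$ with $t_k\ll |y_k|$ produce secant slopes near $f'(y_k)\approx\pm 1$, which do not settle down to $\dd{f}{0}{\pm 1}=0$. Local Lipschitz continuity and directional differentiability at $x$ alone simply do not control off-centre secant slopes, so the estimate you were hoping to ``extract with care'' does not exist.

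What your ``$\ge$'' argument does correctly establish is the inclusion $\convHull\{\dd{f}{x}{1},-\dd{f}{x}{-1}\}\subseteq\genJac{f}(x)$, and in particular that the midpoint $\dcentre{f}(x)=\tfrac12(\dd{f}{x}{1}-\dd{f}{x}{-1})$ is always a Clarke subgradient --- which is the true univariate precursor of \cref{thm:nonconvex} and is all the paper actually uses downstream. The reverse inclusion requires an additional hypothesis such as Clarke regularity or semismoothness that rules out the oscillation above.
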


Hence, one call to an oracle that evaluates directional derivatives is
sufficient to obtain a single Clarke subgradient for such a univariate function
$f$. It will be shown in this article that, for bivariate functions $f:\reals^2\to\reals$
that are locally Lipcshitz continuous and directionally differentiable, four directional derivative
evaluations are sufficient to evaluate a single Clarke
subgradient. 

The following definition by Nesterov~{\cite{Nesterov}} will be used to specialize this
result in a useful way. Nesterov's definition is based on repeated directional
differentiation, and permits certain extensions of calculus
rules for smooth functions to nonsmooth functions.
\begin{definition}
	Consider an open set $X\subset\reals^n$ and a locally Lipschitz
	continuous function
	$f:X\to\reals$. The function $f$ is \emph{lexicographically
		(L--)smooth} at $x\in X$ if the following conditions are satisfied:
	\begin{itemize}
		\item $f$ is directionally differentiable at $x$, 
		\item with $f^{(0)}\coloneqq \dd{f}{x}{\cdot}$, for any collection of vectors
		$m_{(1)},\ldots,m_{(n)}\in\reals^n$, the following inductive sequence of
		higher-order directional derivatives is well-defined:
		\[
		f^{(k)}\coloneqq  \dd{[f^{(k-1)}]}{m_{(k)}}{\cdot},
		\qquad\text{for each }k\in\{1,2,\ldots,n\}.
		\]
	\end{itemize}
	If these vectors $m_{(i)}$ are linearly independent, then $f^{(n)}$ is linear, and its constant gradient
	is called a \emph{lexicographic subgradient} of $f$ at $x$. The
	\emph{lexicographic subdifferential} $\lexDiff{f}(x)$ is the set of
	all lexicographic subgradients of $f$ at $x$.
\end{definition}

All convex functions on open domains in $\reals^n$ are
L-smooth~\cite{Nesterov}, as are differentiable functions, functions
that are piecewise differentiable in the sense of
Scholtes~\cite{Scholtes,KhanBartonFwdAD}, and functions that are
well-defined finite compositions of other
L-smooth functions~\cite{Nesterov}. Further characterizations of
L-smoothness have been developed for certain optimal-value
functions~\cite{StechlinskiNLP}, and for parametric systems of
ordinary differential equations or differential-algebraic
equations~\cite{KhanBartonPlenary,StechlinskiDAE}.

\section{Constructing {a subgradient} from directional derivatives}
\label{sec:subgradient}

This section defines \emph{compass differences} for functions in terms of
directional derivatives, and shows that {a compass difference of a
	bivariate function
	is a subgradient}. As a
corollary, it is also shown that any compact convex  set in $\reals^2$
contains the midpoint of its interval hull. As there is nothing
particularly special about the compass directions in this context,
other choices of directions are also considered.

\subsection{Compass differences}

\begin{definition}
	Consider an open set $X\subset\reals^n$ and a function
	$f:X\to\reals$ that is directionally differentiable at $x\in X$. The
	\emph{compass difference} of $f$ at $x$ is a vector
	$\dcentre{f}(x)\coloneqq (\dcentrei{1}{f}(x),\ldots,\dcentrei{n}{f}(x))\in\reals^n$ for which, for each $i\in\{1,\ldots,n\}$,
	\[
	{\dcentrei{i}{f}(x)}\coloneqq \frac{1}{2}(\dd{f}{x}{\vCoord{i}} - \dd{f}{x}{-\vCoord{i}}).
	\]
\end{definition}

The compass difference is so named because it considers how $f$
behaves when its argument is varied
in each of the compass directions. This metaphor works best when
$n=2$; this case is also the focus of this article.

Evaluating
$\dcentre{f}(x)$ ostensibly requires $2n$ directional derivative evaluations. However, if directional derivative values are not available,
compass differences may instead be approximated using finite differences.
Observe that the compass difference of a function is a centered finite
difference of the directional derivative mapping $\dd{f}{x}{\cdot}$
at $0$. From the definition of the directional derivative, we have,
for each $i\in\{1,\ldots,n\}$,
\[
{\dcentrei{i}{f}(x)}
= \lim_{\delta\downTo{0}}
\frac{f(x+\delta\vCoord{i}) - f(x-\delta\vCoord{i})}{2\delta}.
\]
So, if numerical evaluations of $f:\reals^n\to\reals$ are viable but evaluations of
$\dd{f}{x}{\cdot}$ are not, then $2n$ evaluations of $f$ may be used to
approximate $\dcentre{f}(x)$ using the argument of the above
limit. That is,
for sufficiently small $\delta>0$,
\begin{equation}
\label{eq:finiteDifference}
\dcentre{f}(x)
\approx \frac{1}{2\delta}
\begin{bmatrix}
f(x+\delta\vCoord{1}) - f(x-\delta\vCoord{1}) \\
f(x+\delta\vCoord{2}) - f(x-\delta\vCoord{2}) \\
\vdots \\
f(x+\delta\vCoord{n}) - f(x-\delta\vCoord{n})
\end{bmatrix},
\end{equation}
{which is incidentally the centered simplex gradient of $f$ at $x$ with a
	sampling set comprising the coordinate vectors (c.f.~\cite{ConnScheinbergVicente})}.
However, if $f$ is evaluated here using a numerical method, and if $\delta$
is too small, then the subtraction operations in this
approximation may introduce unacceptable numerical error. This
drawback is typical of finite difference approximations.


\subsection{Nonconvex functions of two variables}
\label{sec:nonconvex}

{As in~\cite{Facchinei}, let us say that a function $\reals^n\to\reals$ is \emph{B-differentiable} if it is
	both directionally differentiable and locally Lipschitz continuous.
	This section presents the main result of this article: that any compass
	difference of a B-differentiable function of two variables is a
	Clarke subgradient.} This result is strengthened somewhat when the
considered {function is L-smooth, and is also
	specialized to convex functions and convex sets in the subsequent sections.}

To our
knowledge, {the main result in this section} is the first general {closed-form} description of a Clarke
subgradient for a nonconvex {bivariate}
function in terms of that function's directional
derivatives {(in the sense of    \cref{def:dirDeriv})}. Moreover, the result shows that four calls to a
directional derivative oracle are sufficient to evaluate a
Clarke subgradient for a bivariate {B-differentiable} function, without any
further structural knowledge of the function at all. Unlike
established characterizations of generalized subgradients such as
\cite[Proposition~4.3.1]{Scholtes} and \cite[Theorem~3.5]{KhanBranch}, this result does not require $f$
to be represented in any particular format.

The following mean-value theorem will be useful
in this development.

\begin{lemma}
	\label{lem:meanValueLip}
	Consider a function $\psi:\reals^n\to\reals$ that is positively
	homogeneous and locally Lipschitz continuous.
	For any $x,y\in\reals^n$, there exists $s\in\genJac{\psi}(0)$ for
	which
	\begin{equation}
	\label{eq:meanValueLip}
	\psi(y) - \psi(x) = \innerProd{s}{y-x}.
	\end{equation}
	If $\psi$ is also L-smooth, then there exists
	$s\in\convHull{\lexDiff{\psi}(0)}\subset \genJac{\psi}(0)$ satisfying \eqref{eq:meanValueLip}.
\end{lemma}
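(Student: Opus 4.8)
The plan is to exploit the positive homogeneity of $\psi$ to reduce the mean-value claim to a statement along the line segment joining $x$ and $y$, and then apply a one-dimensional mean-value argument together with the chain-rule structure of Clarke's generalized gradient. First I would introduce the line segment $\gamma(t) \coloneqq x + t(y-x)$ for $t \in [0,1]$ and consider the univariate function $g \coloneqq \psi \circ \gamma$, which is locally Lipschitz on an open interval containing $[0,1]$. Lebourg's mean-value theorem for locally Lipschitz functions then yields some $\tau \in (0,1)$ and some $\zeta \in \genJac{g}(\tau)$ with $g(1) - g(0) = \zeta$, i.e.\ $\psi(y) - \psi(x) = \zeta$, and the chain rule for Clarke's generalized gradient gives $\genJac{g}(\tau) \subseteq \{\innerProd{s}{y-x} : s \in \genJac{\psi}(\gamma(\tau))\}$. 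So far this produces $s \in \genJac{\psi}(\gamma(\tau))$ rather than $s \in \genJac{\psi}(0)$, and closing that gap is where positive homogeneity enters.

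\textbf{Transferring the subgradient back to the origin.} The key observation is that for a positively homogeneous locally Lipschitz function $\psi$, every point $z \neq 0$ has $\genJac{\psi}(z) \subseteq \genJac{\psi}(0)$: indeed, directional derivatives of $\psi$ at $0$ dominate the generalized directional derivative $\psi^\circ(z; \cdot)$ because $\psi^\circ(0;d) \ge \psi^\circ(z;d)$ follows from writing difference quotients at points near $z$ as scaled difference quotients near $0$ via $\psi(w + td) - \psi(w) = \|w\|\big(\psi(\tfrac{w}{\|w\|} + \tfrac{t}{\|w\|}d) - \psi(\tfrac{w}{\|w\|})\big)$ and taking limits suprema; alternatively one cites the standard fact that $\genJac{\psi}(\lambda z) = \genJac{\psi}(z)$ for $\lambda > 0$ together with the upper semicontinuity of $\genJac{\psi}$ to let $\lambda \downarrow 0$. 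Either route gives $s \coloneqq y-x$-paired subgradient lying in $\genJac{\psi}(0)$, completing the first claim. I expect the main obstacle to be justifying this inclusion cleanly: the blow-up/scaling argument is elementary but needs care at $\gamma(\tau)$ if that point happens to be $0$ (which can only occur when $0$ lies on the segment, in which case one perturbs $\tau$ slightly or handles it directly since $\genJac{\psi}(0)$ trivially contains the needed value by convexity).

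\textbf{The L-smooth refinement.} For the second claim, I would replace Lebourg's theorem with the mean-value theorem for L-smooth functions, which states that $g(1) - g(0) \in \convHull\{(\text{lexicographic derivatives of } g \text{ along } [0,1])\}$; equivalently, using the chain rule for lexicographic subgradients, $\psi(y) - \psi(x) = \innerProd{s}{y-x}$ for some $s \in \convHull\big(\bigcup_{t\in[0,1]} \lexDiff{\psi}(\gamma(t))\big)$. One then invokes positive homogeneity again, now in the form $\lexDiff{\psi}(\lambda z) = \lexDiff{\psi}(z)$ for $\lambda > 0$ and a limiting argument as the base point is scaled toward $0$, to conclude $s \in \convHull{\lexDiff{\psi}(0)}$; the final inclusion $\convHull{\lexDiff{\psi}(0)} \subseteq \genJac{\psi}(0)$ is the standard relationship between the lexicographic subdifferential and Clarke's generalized gradient for L-smooth functions. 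The only delicate point here, as before, is the possibility that $0 \in \gamma([0,1])$, handled the same way.
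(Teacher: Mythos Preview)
Your approach is essentially the paper's: apply a mean-value theorem (Lebourg's, then Nesterov's in the L-smooth case) to obtain $s\in\genJac{\psi}(z)$ (resp.\ $s\in\convHull{\lexDiff{\psi}(z)}$) for some $z$ on the segment, then use positive homogeneity to pull the subgradient back to the origin via an inclusion $\genJac{\psi}(z)\subset\genJac{\psi}(0)$ (resp.\ $\lexDiff{\psi}(z)\subset\lexDiff{\psi}(0)$). The paper simply cites external lemmas for these two inclusions rather than arguing them from scratch, but the architecture is identical. Your worry about $\gamma(\tau)=0$ is harmless, since in that case $s\in\genJac{\psi}(0)$ directly.

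There is one genuine soft spot in your L-smooth paragraph. For the Clarke inclusion $\genJac{\psi}(z)\subset\genJac{\psi}(0)$ you correctly offer two routes, and the scaling-plus-outer-semicontinuity route works because $\genJac{\psi}$ is osc. But for the lexicographic inclusion you propose only the analogue: $\lexDiff{\psi}(\lambda z)=\lexDiff{\psi}(z)$ for $\lambda>0$, followed by ``a limiting argument as the base point is scaled toward $0$.'' The lexicographic subdifferential is \emph{not} known to be outer semicontinuous, so letting $\lambda\downarrow 0$ does not by itself yield $\lexDiff{\psi}(z)\subset\lexDiff{\psi}(0)$. The correct argument is direct rather than limiting: since $\psi$ is positively homogeneous, $\dd{\psi}{0}{\cdot}=\psi$, so the lexicographic process at $0$ with first direction $z$ produces $\psi^{(1)}=\dd{\psi}{z}{\cdot}$, which is exactly the starting point of the lexicographic process at $z$; chasing the remaining directions then shows each lexicographic subgradient at $z$ already appears as a lexicographic subgradient at $0$. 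This is the content of the lemma the paper invokes, and it is what your sketch needs in place of the limit.
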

\begin{proof}
	We first proceed without the L-smoothness assumption. According to
	Lebourg's mean-value theorem \cite[Theorem~2.3.7]{Clarke}, the
	equation \eqref{eq:meanValueLip} holds for some
	$z\in\convHull{\{x,y\}}$ and some $s\in\genJac{\psi}(z)$. Since
	$\psi$ is positively homogeneous and locally Lipschitz continuous, \cite[Lemma~3.1]{KhanBartonPlenary}
	implies that $\genJac{\psi}(z)\subset\genJac{\psi}(0)$, and so
	$s\in\genJac{\psi}(0)$, as required.
	
	Next, if $\psi$ is additionally assumed to be L-smooth, then the
	final claimed result is obtained by a similar argument, applying
	Nesterov's mean-value theorem \cite[Theorem~12]{Nesterov} instead of
	Lebourg's, applying \cite[Lemma~4]{KhanDirected} instead of
	\cite[Lemma~3.1]{KhanBartonPlenary}, and applying
	\cite[Theorem~11]{Nesterov} to establish the inclusion
	{$\convHull{\lexDiff{\psi}(0)}\subset\genJac{\psi}(0)$}.
\end{proof}


The following theorem is the main result of this article, and rests
heavily on \cref{lem:meanValueLip}. It shows that {any
	compass difference of a B-differentiable function is a Clarke subgradient}, and specializes this result to
L-smooth functions.

\begin{theorem}
	\label{thm:nonconvex}
	Consider an open set $X\subset\reals^2$ and a locally Lipschitz
	continuous function $f:X\to\reals$. If $f$ is directionally
	differentiable at some $x\in X$, then $\dcentre{f}(x)\in\genJac{f}(x)$.  Moreover, if $f$ is L-smooth at $x\in X$, then $\dcentre{f}(x)\in\clConvHull{\lexDiff{f}(x)}\subset\genJac{f}(x)$.
\end{theorem}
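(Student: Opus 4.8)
The plan is to work with the directional-derivative mapping $\psi \coloneqq \dd{f}{x}{\cdot} : \reals^2 \to \reals$ as a positively homogeneous, locally Lipschitz continuous function in its own right, and to reduce the claim about $\genJac{f}(x)$ to a statement about $\genJac{\psi}(0)$. First I would record the standard fact that, because $f$ is locally Lipschitz near $x$ and directionally differentiable there, the mapping $\psi$ inherits local Lipschitz continuity with the same constant, is positively homogeneous of degree one, and satisfies $\psi(d) \le f^\circ(x;d)$ for every $d$; consequently $\genJac{\psi}(0) \subseteq \genJac{f}(x)$. Thus it suffices to show $\dcentre{f}(x) \in \genJac{\psi}(0)$, i.e. to show that the vector $s^* \coloneqq \tfrac{1}{2}(\psi(\vCoord{1}) - \psi(-\vCoord{1}),\, \psi(\vCoord{2}) - \psi(-\vCoord{2}))$ is a Clarke subgradient of $\psi$ at the origin.

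To verify $s^* \in \genJac{\psi}(0)$ I must check $\psi^\circ(0;d) \ge \innerProd{s^*}{d}$ for all $d \in \reals^2$; since $\psi$ is positively homogeneous it is enough to check this for $d$ on the unit circle, and in fact for each of the four closed quadrant-arcs separately. Here is where I would use \cref{lem:meanValueLip}: given any direction $d$, apply the mean-value theorem to $\psi$ along the segment joining two suitably chosen compass points. Concretely, for $d$ in the first quadrant, write $d$ as a convex-type combination involving $\vCoord{1}$ and $\vCoord{2}$ and use the mean value theorem on the pairs $(\vCoord{1},\vCoord{2})$, $(-\vCoord{1},-\vCoord{2})$ to produce, via \eqref{eq:meanValueLip}, Clarke subgradients $s_1, s_2 \in \genJac{\psi}(0)$ with $\psi(\vCoord{1}) - \psi(\vCoord{2}) = \innerProd{s_1}{\vCoord{1}-\vCoord{2}}$ and $\psi(-\vCoord{1}) - \psi(-\vCoord{2}) = \innerProd{s_2}{\vCoord{1}-\vCoord{2}}$. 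Combining these two identities with the definitions of $\dcentrei{1}{f}(x)$ and $\dcentrei{2}{f}(x)$ should express $\psi(\vCoord{1}) + \psi(-\vCoord{1}) - \psi(\vCoord{2}) - \psi(-\vCoord{2})$ — equivalently the difference $\psi(\vCoord{1}) - \innerProd{s^*}{\vCoord{1}}$ versus $\psi(\vCoord{2}) - \innerProd{s^*}{\vCoord{2}}$ — in terms of inner products with honest Clarke subgradients, and the homogeneity together with $\psi^\circ(0;\cdot) \ge \psi(\cdot)$ then closes the quadrant estimate. Repeating the symmetric argument in the remaining three quadrants gives $s^* \in \genJac{\psi}(0) \subseteq \genJac{f}(x)$.

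For the L-smooth refinement I would run the identical argument but invoke the second sentence of \cref{lem:meanValueLip}, which supplies each mean-value vector in $\convHull{\lexDiff{\psi}(0)}$ rather than merely $\genJac{\psi}(0)$; I also need the analogue $\lexDiff{\psi}(0) \subseteq \lexDiff{f}(x)$ (the directional-derivative map of an L-smooth function is itself L-smooth with lexicographic subdifferential at $0$ contained in that of $f$ at $x$, which follows from Nesterov's construction since the higher-order directional derivatives of $\psi$ at $0$ coincide with those of $f$ at $x$). Taking closed convex hulls throughout then yields $\dcentre{f}(x) \in \clConvHull{\lexDiff{f}(x)} \subseteq \genJac{f}(x)$.

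The main obstacle I anticipate is the quadrant estimate itself: showing that the two mean-value identities in a given quadrant genuinely combine to dominate $\innerProd{s^*}{d}$ by $\psi^\circ(0;d)$ for \emph{every} $d$ in that quadrant, not just at the compass points, will require exploiting convexity/homogeneity of $\psi^\circ(0;\cdot)$ carefully and choosing the segments along which the mean value theorem is applied so that the resulting subgradients pair correctly against $\vCoord{1}-\vCoord{2}$ (or $\vCoord{1}+\vCoord{2}$). This is also precisely the place where the dimension being two is essential — with only two coordinate axes the plane is split into exactly four quadrants each bounded by two compass directions, and a single mean-value application per boundary pair suffices; in higher dimensions the analogous covering of the sphere by compass-spanned cones breaks down, which is consistent with the paper's later remark that the result does not extend to $n > 2$.
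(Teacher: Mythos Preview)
Your reduction step---passing from $\genJac{f}(x)$ to the Clarke subdifferential at $0$ of the positively homogeneous map $\psi=\dd{f}{x}{\cdot}$---is essentially the paper's opening move; the paper additionally subtracts $\dcentre{f}(x)$ so that the target becomes $0\in\genJac{\psi}(0)$. One minor point: your justification of $\genJac{\psi}(0)\subseteq\genJac{f}(x)$ via ``$\psi(d)\le f^\circ(x;d)$'' is not sufficient as written, since membership in $\genJac{\psi}(0)$ is governed by $\psi^\circ(0;\cdot)$, not by $\psi$ itself. The inclusion is true, but it is a nontrivial fact that the paper cites rather than derives.

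The substantive gap is in your ``quadrant estimate.'' You propose to apply \cref{lem:meanValueLip} on \emph{fixed} segments such as $[\vCoord{1},\vCoord{2}]$ and $[-\vCoord{1},-\vCoord{2}]$, which produces subgradients $s_1,s_2\in\genJac{\psi}(0)$ satisfying identities only in the single direction $\vCoord{1}-\vCoord{2}$. That information does not control $\psi^\circ(0;d)-\innerProd{s^*}{d}$ for a general $d$ in the quadrant, and since $\psi^\circ(0;\cdot)$ is convex rather than concave, checking the inequality at the compass endpoints does not propagate inward. The phrases ``should express \ldots'' and ``then closes the quadrant estimate'' mark exactly where the argument does not close.

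The paper's device is to let the mean-value segment depend on the direction. Arguing by contradiction, it takes a separating normal $p=(p_1,p_2)$ for $0$ and $\genJac{\psi}(0)$, and applies \cref{lem:meanValueLip} on the two segments from $(p_1,0)$ to $(0,-p_2)$ and from $(-p_1,0)$ to $(0,p_2)$, whose difference vectors are exactly $p$ and $-p$. The normalization $\dcentre{\psi}(0)=0$ forces $\psi(p_1,0)=\psi(-p_1,0)$ and $\psi(0,p_2)=\psi(0,-p_2)$, so the two mean-value identities have equal left-hand sides but right-hand sides $\innerProd{\eta}{p}\ge a>0$ and $-\innerProd{\sigma}{p}\le -a<0$, a contradiction. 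The genuinely two-dimensional fact being used is that \emph{every} vector $p\in\reals^2$ can be written as the difference of a point on the $x_1$-axis and a point on the $x_2$-axis; this is what fails for $n>2$, rather than any quadrant-covering combinatorics. Your L-smooth paragraph is fine and matches the paper's treatment.
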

\begin{proof}
	Suppose that $f$ is directionally differentiable at $x\in X$.  Consider the auxiliary mapping:
	\[
	\psi:y\mapsto \dd{f}{x}{y}-\innerProd{\dcentre{f}(x)}{y},
	\]
	and observe that $\psi$ is Lipschitz continuous~\cite{Scholtes}, and that
	$\dd{f}{x}{y}=\psi(y)+\innerProd{\dcentre{f}(x)}{y}$ for each
	$y\in\reals^2$. Thus, Clarke's calculus rule for addition
	\cite[Corollary~1 to Proposition~2.3.3]{Clarke} implies:
	\[
	\genJac{[\dd{f}{x}{\cdot}]}(0)
	= \{a + \dcentre{f}(x): a\in\genJac{\psi}(0)\}.
	\]
	Moreover, \cite[Corollary~3.1]{KhanBartonPlenary} and \cite{Imbert}
	imply $\genJac{[\dd{f}{x}{\cdot}]}(0)\subset\genJac{f}(x)$, and so
	\[
	\{a + \dcentre{f}(x): a\in\genJac{\psi}(0)\} \subset \genJac{f}(x).
	\]
	It therefore suffices to show that $0\in\genJac{\psi}(0)$.
	
	{Now, observe that $\psi$ is positively homogeneous, and so
		$\psi$ is equivalent to $\dd{\psi}{0}{\cdot}$. Thus, for each
		$i\in\{1,2\}$,
		\begin{align*}
			\dcentrei{i}{\psi}(0)
			&= \tfrac{1}{2}(\psi(e_{(i)})-\psi(-e_{(i)})) \\
			&= \tfrac{1}{2}\left[
			(\dd{f}{x}{e_{(i)}}-\innerProd{\dcentre{f}(x)}{e_{(i)}})
			- (\dd{f}{x}{-e_{(i)}}-\innerProd{\dcentre{f}(x)}{-e_{(i)}})
			\right] \\
			&= \dcentrei{i}{f}(x) - \innerProd{\dcentre{f}(x)}{e_{(i)}} \\
			&= 0.
		\end{align*}
		Hence $\dcentre{\psi}(0)=0$.
	}

	To obtain a contradiction, suppose that
	$0\notin\genJac{\psi}(0)$. Then, since $\genJac{\psi}(0)$ is convex
	and closed, there must exist a
	strictly separating hyperplane between $0$ and
	$\genJac{\psi}(0)$. That is, there exist a nonzero vector
	$p\coloneqq (p_1,p_2)\in\reals^2$ and a scalar $a>0$ for
	which
	$\innerProd{p}{s}\geq a$ for each $s\in
	\genJac{\psi}(0)$.
	
	Since $\dcentre{\psi}(0)=0$, we have
	$\psi(1,0)=\psi(-1,0)$ and $\psi(0,1)=\psi(0,-1)$.
	Since $\psi$ is positively homogeneous, we then have
	$\psi(p_1,0)=\psi(-p_1,0)$ and $\psi(0,p_2)=\psi(0,-p_2)$
	(regardless of the signs of $p_1$ and $p_2$). Subtraction then
	yields:
	\begin{equation}
	\label{eq:struts}
	\psi(p_1,0) - \psi(0,-p_2) = \psi(-p_1,0) - \psi(0,p_2).
	\end{equation}
	
	Now, according to \cref{lem:meanValueLip}, there exist vectors
	$\eta,\sigma\in \genJac{\psi}(0)$ for which
	\[
	\psi(p_1,0) - \psi(0,-p_2) = \innerProd{p}{\eta}
	\qquad\text{and}\qquad
	\psi(-p_1,0) - \psi(0,p_2) = -\innerProd{p}{\sigma}.
	\]
	Hence, since $\innerProd{p}{s}\geq a$ for each $s\in
	\genJac{\psi}(0)$, we have
	\[
	\psi(p_1,0) - \psi(0,-p_2)\geq a > 0 > -a \geq \psi(-p_1,0) - \psi(0,p_2),
	\]
	which contradicts \eqref{eq:struts}. Thus,
	$0\in  \genJac{\psi}(0)$ as required.    
	
	Next, suppose that $f$ is L-smooth at $x\in X$. The inclusion $\lexDiff{f}(x)\subset\genJac{f}(x)$ was shown by
	Nesterov \cite[Theorem~11]{Nesterov}; since $\genJac{f}(x)$ is
	closed and convex, it follows that
	$\clConvHull{\lexDiff{f}(x)}\subset\genJac{f}(x)$.
	{Consider the auxiliary mapping $\psi$ as above, and note that
		\eqref{eq:struts} still holds.
		The calculus rules of the lexicographic
		subdifferential~\cite[Theorem~5 and Definitions~1 and~5]{Nesterov}
		imply that both $\dd{f}{x}{\cdot}$ and $\psi$ are L-smooth at $0$, and that
		\[
		\lexDiff{f}(x)=\lexDiff{[\dd{f}{x}{\cdot}]}(0)
		=\{a+\dcentre{f}(x):a\in\lexDiff{\psi}(0)\}.
		\]
		From here, a similar argument to the previous case shows
		that $\dcentre{f}(x)\in\clConvHull{\lexDiff{f}(x)}$.}
\end{proof}



{Intuitively,} there is nothing special about the coordinate directions used to
construct a compass difference, and a change of
basis in \cref{thm:nonconvex} may be carried out as follows.
\begin{corollary}
	\label{cor:nonconvex}
	Consider an open set $X\subset\reals^2$, a locally Lipschitz
	continuous function
	$f:X\to\reals$, and a nonsingular matrix $V\in \reals^{2\times 2}$.
	If $f$ is directionally differentiable at some $x\in X$, and if $v_{(i)}$ denotes the $\supth{i}$ column of
	$V$, then
	\[
	\frac{1}{2}(\transpose{V})^{-1}
	\begin{bmatrix}
	\dd{f}{x}{v_{(1)}} - \dd{f}{x}{-v_{(1)}} \\
	\dd{f}{x}{v_{(2)}} - \dd{f}{x}{-v_{(2)}} 
	\end{bmatrix}
	\in \genJac{f}(x).
	\]
\end{corollary}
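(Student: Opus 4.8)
The plan is to deduce \cref{cor:nonconvex} from \cref{thm:nonconvex} via a linear change of variables. Fix the point $x\in X$ at which $f$ is directionally differentiable, and define the auxiliary function $g:z\mapsto f(x+Vz)$ on a neighbourhood of $0\in\reals^2$; this is well defined because $V$ is nonsingular and $X$ is open, and $g$ is locally Lipschitz continuous near $0$ since $f$ is locally Lipschitz on $X$ and $z\mapsto x+Vz$ is affine. For each $d\in\reals^2$ and $t>0$ we have $g(td)-g(0)=f(x+t(Vd))-f(x)$, so $\dd{g}{0}{d}=\dd{f}{x}{Vd}$; in particular $g$ is directionally differentiable at $0$. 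Applying \cref{thm:nonconvex} to $g$ at $0$ therefore gives $\dcentre{g}(0)\in\genJac{g}(0)$, and for each $i\in\{1,2\}$ the $\supth{i}$ component of $\dcentre{g}(0)$ is
\[
\dcentrei{i}{g}(0)=\tfrac{1}{2}\bigl(\dd{g}{0}{\vCoord{i}}-\dd{g}{0}{-\vCoord{i}}\bigr)=\tfrac{1}{2}\bigl(\dd{f}{x}{v_{(i)}}-\dd{f}{x}{-v_{(i)}}\bigr),
\]
so $\dcentre{g}(0)$ equals $\tfrac{1}{2}$ times the vector inside the brackets in the statement.

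Next I would transport the inclusion $\dcentre{g}(0)\in\genJac{g}(0)$ back to $\genJac{f}(x)$. Because $V$ is invertible, substituting $w=Vy$ in the limit superior of \cref{def:Clarke} shows that $g^\circ(0;d)=f^\circ(x;Vd)$ for every $d\in\reals^2$, since the admissible perturbations $Vy$ still sweep out a full neighbourhood of the origin as $y\to 0$. Hence $s\in\genJac{g}(0)$ if and only if $\innerProd{s}{V^{-1}w}=\innerProd{(\transpose{V})^{-1}s}{w}\le f^\circ(x;w)$ for all $w\in\reals^2$, i.e. if and only if $(\transpose{V})^{-1}s\in\genJac{f}(x)$; equivalently $\genJac{g}(0)=\transpose{V}\,\genJac{f}(x)$. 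Applying this with $s=\dcentre{g}(0)$ yields $(\transpose{V})^{-1}\dcentre{g}(0)\in\genJac{f}(x)$, which by the component computation above is exactly the claimed vector.

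There is no real obstacle beyond \cref{thm:nonconvex} itself: the only point requiring care is the change of variables $w=Vy$ in the $\limsup$ defining $g^\circ(0;\cdot)$, which needs $V$ nonsingular so that $y\leftrightarrow Vy$ is a homeomorphism between neighbourhoods of $0$; the translation by $x$ is harmless because Clarke's generalized directional derivative is built from differences. One could instead avoid the substitution entirely by rerunning the proof of \cref{thm:nonconvex} verbatim with the coordinate directions $\vCoord{1},\vCoord{2}$ replaced by the columns $v_{(1)},v_{(2)}$ of $V$, but the change-of-variables argument is shorter and isolates the genuinely new content.
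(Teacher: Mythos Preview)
Your proof is correct and follows essentially the same route as the paper: compose $f$ with an affine map built from $V$, apply \cref{thm:nonconvex} to the composite, and transport the resulting inclusion back to $\genJac{f}(x)$ via the identity $\genJac{g}(0)=\transpose{V}\,\genJac{f}(x)$. The only cosmetic differences are that the paper centers its affine map at $x$ rather than at $0$, and invokes Clarke's chain rule \cite[Theorem~2.3.10]{Clarke} for that last identity instead of deriving it directly from the $\limsup$ definition as you do.
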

\begin{proof}
	Consider auxiliary mappings:
	\[
	g:\reals^2\to\reals^2: y\mapsto x + V(y-x)
	\]
	and $h\coloneqq f\circ g$. Let
	\[
	z\coloneqq \frac{1}{2}\begin{bmatrix}
	\dd{f}{x}{v_{(1)}} - \dd{f}{x}{-v_{(1)}} \\
	\dd{f}{x}{v_{(2)}} - \dd{f}{x}{-v_{(2)}} 
	\end{bmatrix}.
	\]
	The chain rule for directional
	derivatives~\cite[Theorem~3.1.1]{Scholtes} implies that
	$\dcentre{h}(x)=z$, and so \cref{thm:nonconvex} shows that $z\in\genJac{h}(x)$.
	Since $V$ is nonsingular, $g$ is surjective, in which case
	\cite[Theorem~2.3.10]{Clarke} implies that:
	\[
	\genJac{h}(x) = \{\transpose{V}s: s\in\genJac{f}(x)\}.
	\]
	Thus, $z=\transpose{V}s$ for some $s\in\genJac{f}(x)$, and so
	$(\transpose{V})^{-1}z\in\genJac{f}(x)$ as claimed.
\end{proof}

{The particular Clarke subgradients identified by
	\cref{thm:nonconvex} and \cref{cor:nonconvex} do not
	necessarily coincide.}

We may remove the directional differentiability requirement of
\cref{thm:nonconvex} as follows, by employing {Clarke's}
generalized directional
derivative $f^\circ$ from \cref{def:Clarke}. We note, however, that the generalized directional derivative
is typically inaccessible in practice.

\begin{corollary}
	Given an open set $X\subset\reals^2$, a locally Lipschitz continuous function
	$f:X\to\reals$, and some $x\in X$,
	\[
	\dcentre{[f^\circ(x;\cdot)]}(0) = \frac{1}{2}
	\begin{bmatrix}
	f^\circ(x;(1,0)) - f^\circ(x;(-1,0)) \\
	f^\circ(x;(0,1)) - f^\circ(x;(0,-1))
	\end{bmatrix}
	\in \genJac{[f^\circ(x;\cdot)]}(0)=\genJac{f}(x).
	\]
\end{corollary}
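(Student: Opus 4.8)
The plan is to apply \cref{thm:nonconvex} to the single auxiliary function $g\coloneqq f^\circ(x;\cdot)$, regarded as a function on the open set $\reals^2$. Everything then reduces to three routine checks: (a) that $g$ is B-differentiable, (b) an explicit formula for $\dcentre{g}(0)$, and (c) the identification $\genJac{g}(0)=\genJac{f}(x)$.

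For (a), I would invoke the standard properties of Clarke's generalized directional derivative \cite[Proposition~2.1.1]{Clarke}: since $f$ is locally Lipschitz continuous near $x$, the map $g:\reals^2\to\reals$ is finite-valued, positively homogeneous, subadditive, and globally Lipschitz continuous. In particular $g$ is locally Lipschitz on $\reals^2$, and it is directionally differentiable there; indeed, positive homogeneity gives $\dd{g}{0}{d}=\lim_{t\downTo{0}}\tfrac{g(td)-g(0)}{t}=g(d)$ for every $d\in\reals^2$. Hence $g$ is B-differentiable, so \cref{thm:nonconvex} applies and yields $\dcentre{g}(0)\in\genJac{g}(0)$. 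For (b), the same identity $\dd{g}{0}{d}=g(d)$ shows that the $\supth{i}$ component of $\dcentre{g}(0)$ equals $\tfrac12\big(\dd{g}{0}{\vCoord{i}}-\dd{g}{0}{-\vCoord{i}}\big)=\tfrac12\big(f^\circ(x;\vCoord{i})-f^\circ(x;-\vCoord{i})\big)$, which is exactly the $\supth{i}$ entry of the displayed vector; this establishes the first equality in the statement.

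For (c), since $g$ is convex and finite on the open set $\reals^2$, its Clarke generalized gradient at $0$ coincides with its convex subdifferential, so \eqref{eq:ddAndSub} gives $\genJac{g}(0)=\{s\in\reals^2:\dd{g}{0}{d}\geq\innerProd{s}{d}\text{ for all }d\in\reals^2\}$; substituting $\dd{g}{0}{d}=g(d)=f^\circ(x;d)$ rewrites the right-hand side as $\{s\in\reals^2:f^\circ(x;d)\geq\innerProd{s}{d}\text{ for all }d\in\reals^2\}$, which is precisely $\genJac{f}(x)$ by \cref{def:Clarke}. Chaining (a)--(c) then delivers $\dcentre{[f^\circ(x;\cdot)]}(0)=\dcentre{g}(0)\in\genJac{g}(0)=\genJac{f}(x)$, as claimed. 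I do not anticipate a genuine obstacle here: the corollary is just \cref{thm:nonconvex} specialized to the sublinear function $f^\circ(x;\cdot)$, and the only steps warranting care are confirming that $f^\circ(x;\cdot)$ meets the locally-Lipschitz-and-directionally-differentiable hypothesis on an open set, and correctly quoting the classical identity $\genJac{[f^\circ(x;\cdot)]}(0)=\genJac{f}(x)$.
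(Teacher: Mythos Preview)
Your proposal is correct and follows essentially the same approach as the paper: apply \cref{thm:nonconvex} to the auxiliary function $g=f^\circ(x;\cdot)$ after verifying that $g$ is locally Lipschitz and directionally differentiable, and then identify $\genJac{g}(0)$ with $\genJac{f}(x)$. The only cosmetic difference is that the paper packages steps~(a) and~(c) by citing the support-function characterization of $f^\circ(x;\cdot)$ from \cite{HiriartUrrutyLemarechal} and \cite{Clarke} directly, whereas you unpack these facts more explicitly via positive homogeneity, sublinearity, and \eqref{eq:ddAndSub}.
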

\begin{proof}
	Established results
	\cite[Section~V, Proposition~2.1.2]{HiriartUrrutyLemarechal} and
	\cite[Proposition~2.1.2]{Clarke} imply that $f^\circ(x;\cdot)$ is
	convex and positively homogeneous (as the support function of
	$\genJac{f}(x)$), and has the subdifferential $\genJac{f}(x)$ at
	$0$. {Moreover, as a convex function on an open domain, $f^\circ(x;\cdot)$ is locally
		Lipschitz continuous and directionally differentiable~\cite{HiriartUrrutyLemarechal}. Hence, \cref{thm:nonconvex}} implies the claimed result.
\end{proof}

\subsection{Convex functions of two variables}
\label{sec:convex}

{This section specializes \cref{thm:nonconvex}} to convex functions; this specialization
appears to be a novel result in convex analysis and is simpler to
state. Namely, {any compass
	difference of a bivariate convex function is in fact a subgradient in the
	traditional sense}. Hence, four directional derivative evaluations are
sufficient to construct a subgradient of a bivariate convex function.

\begin{corollary}
	\label{cor:compassConvex}
	Consider an open convex set $X\subset\reals^2$ and a convex function
	$f:X\to\reals$. For each $x\in X$, $\dcentre{f}(x)\in\genJac{f}(x)$.
\end{corollary}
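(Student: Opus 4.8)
The plan is to derive \cref{cor:compassConvex} as an immediate consequence of \cref{thm:nonconvex}. The key observation is that every convex function on an open convex domain is automatically locally Lipschitz continuous and directionally differentiable; both facts are standard in convex analysis (e.g.~\cite{HiriartUrrutyLemarechal}), so the hypotheses of \cref{thm:nonconvex} are satisfied at every $x\in X$. Applying that theorem then yields $\dcentre{f}(x)\in\genJac{f}(x)$, where $\genJac{f}(x)$ denotes Clarke's generalized gradient.

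Next I would invoke the fact, also recalled in \cref{sec:nonsmooth}, that Clarke's generalized gradient coincides with the classical convex subdifferential when $f$ is convex~\cite{Clarke}. Hence the inclusion $\dcentre{f}(x)\in\genJac{f}(x)$ obtained from \cref{thm:nonconvex} is exactly the statement that $\dcentre{f}(x)$ is a subgradient of $f$ at $x$ in the traditional sense of \eqref{eq:subtangent}. There is essentially no additional work beyond assembling these two citations.

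Since this is purely a specialization, there is no real obstacle; the ``hard part'' was already carried out in the proof of \cref{thm:nonconvex} (the separating-hyperplane argument against $0\notin\genJac{\psi}(0)$, combined with the strut identity \eqref{eq:struts}). If one wished to make the corollary self-contained one could alternatively argue directly: convexity gives that $\dd{f}{x}{\cdot}$ is itself convex and positively homogeneous with $\genJac{[\dd{f}{x}{\cdot}]}(0)=\genJac{f}(x)$ via \eqref{eq:ddAndSub}, and then the same compass-difference reasoning applies verbatim. But the cleanest presentation is simply to cite \cref{thm:nonconvex} together with the convex case of Clarke's generalized gradient, which is the route I would take.
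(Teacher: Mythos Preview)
Your proposal is correct and matches the paper's own proof essentially line for line: the paper simply notes that convexity on an open set yields local Lipschitz continuity and directional differentiability, and then invokes \cref{thm:nonconvex}. Your additional remark that Clarke's generalized gradient coincides with the convex subdifferential is implicit in the paper's overloaded notation and is exactly the right way to interpret the conclusion.
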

\begin{proof}
	Since $f$ is convex and $X$ is open, $f$ is
	locally Lipschitz continuous and directionally
	differentiable~\cite{HiriartUrrutyLemarechal}. {The claimed result
		then follows immediately from
		\cref{thm:nonconvex}.}
\end{proof}

\begin{corollary}
	\label{cor:convex}
	Consider an open convex set $X\subset\reals^2$, a convex function
	$f:X\to\reals$, and a nonsingular matrix $V\in \reals^{2\times 2}$.
	For any $x\in X$, with $v_{(i)}$ denoting the $\supth{i}$ column of
	$V$,
	\[
	\frac{1}{2}(\transpose{V})^{-1}
	\begin{bmatrix}
	\dd{f}{x}{v_{(1)}} - \dd{f}{x}{-v_{(1)}} \\
	\dd{f}{x}{v_{(2)}} - \dd{f}{x}{-v_{(2)}} 
	\end{bmatrix}
	\in \genJac{f}(x).
	\]
\end{corollary}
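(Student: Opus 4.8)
The plan is to derive this statement as an immediate corollary of \cref{cor:nonconvex}, using only the standard fact that Clarke's generalized gradient coincides with the convex subdifferential when $f$ is convex.

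First I would record the regularity of convex functions: since $X$ is open and convex and $f:X\to\reals$ is convex, $f$ is locally Lipschitz continuous and directionally differentiable on $X$~\cite{HiriartUrrutyLemarechal}, exactly as in the proof of \cref{cor:compassConvex}. In particular the four directional derivatives $\dd{f}{x}{v_{(1)}}$, $\dd{f}{x}{-v_{(1)}}$, $\dd{f}{x}{v_{(2)}}$, and $\dd{f}{x}{-v_{(2)}}$ all exist in $\reals$, and the hypotheses of \cref{cor:nonconvex} are satisfied for this $f$, this nonsingular $V$, and this $x$.

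Then I would apply \cref{cor:nonconvex} verbatim to conclude that the vector in the statement lies in Clarke's generalized gradient $\genJac{f}(x)$. Finally, because $f$ is convex, Clarke's generalized gradient equals the convex subdifferential~\cite{Clarke}, so this membership is precisely the asserted conclusion, and the proof is complete.

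I do not expect any genuine obstacle here: all of the real work is already carried by \cref{thm:nonconvex} and \cref{cor:nonconvex}, and convexity enters only to furnish the directional differentiability that \cref{cor:nonconvex} requires and to reinterpret the resulting Clarke subgradient as a bona fide convex subgradient. The only step warranting a moment's care is this last reinterpretation, since the symbol $\genJac{f}(x)$ is deliberately overloaded to denote both the Clarke generalized gradient and the convex subdifferential, which coincide exactly in the convex setting.
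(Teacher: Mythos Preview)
Your proposal is correct and matches the paper's own proof essentially verbatim: the paper simply observes that convexity on an open domain makes $f$ locally Lipschitz continuous and directionally differentiable, and then invokes \cref{cor:nonconvex} as a special case. Your added remark about the coincidence of Clarke's generalized gradient with the convex subdifferential is sound but not strictly needed, since the paper uses the same symbol $\genJac{f}(x)$ for both throughout.
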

\begin{proof}
	Again, since $f$ is locally Lipschitz continuous and directionally differentiable, the claimed corollary is a special case of 
	\cref{cor:nonconvex}.
\end{proof}

\subsection{Compact convex sets in $\reals^2$}

This section applies \cref{cor:compassConvex} to show that
any nonempty compact convex set in $\reals^2$ contains the center of
its smallest enclosing box (or \emph{interval}). These notions are formalized in the
following classical definitions {(summarized in~\cite{Neumaier})}, followed by the claimed result.

\begin{definition}
	An \emph{interval} in $\reals^n$ is a nonempty set of the form
	$\{x\in\reals^n:a\leq x\leq b\}$, where $a,b\in\reals^n$, and where each inequality is to be
	interpreted componentwise. The \emph{midpoint} of an interval
	$\{x\in\reals^n:a\leq x\leq b\}$ is $\frac{1}{2}(a+b)\in\reals^n$.

	Given a bounded set $B\subset\reals^n$, the \emph{interval hull} of
	$B$ is the intersection in $\reals^n$ of all interval supersets of
	$B$. 
\end{definition}

The interval hull of a bounded set $B\subset\reals^n$ is itself an
interval, and is, intuitively, the smallest interval superset of $B$. Support functions of convex sets, defined as follows and discussed at
length in~\cite{HiriartUrrutyLemarechal}, are useful when relating convex
sets to properties of subdifferentials of convex functions.

\begin{definition}
	Given a set $C\subset\reals^n$, the \emph{support function}
	of $C$ is the mapping:
	\[
	\sigma_C:\reals^n\to\reals\cup\{\pm\infty\}:
	d\mapsto\sup\{\innerProd{d}{x}:x\in C\}.
	\]
\end{definition}

The following corollary uses support functions to extend
\cref{cor:compassConvex} to the problem of locating an element
of a closed convex set in $\reals^2$.

\begin{corollary}
	\label{cor:support}
	Any nonempty compact convex set $C\subset\reals^2$ contains the
	midpoint of its interval hull. 
\end{corollary}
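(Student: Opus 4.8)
The plan is to realize the compact convex set $C$ as a subdifferential of a convex function, so that \cref{cor:compassConvex} applies directly. The natural choice is the support function $\sigma_C:\reals^2\to\reals$: since $C$ is nonempty and compact, $\sigma_C$ is finite-valued everywhere, convex, and positively homogeneous, and the classical theory (see~\cite{HiriartUrrutyLemarechal}) gives $\genJac{\sigma_C}(0)=C$. Thus $\sigma_C$ is a convex function on the open convex set $\reals^2$, and \cref{cor:compassConvex} tells us that $\dcentre{\sigma_C}(0)\in\genJac{\sigma_C}(0)=C$.

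It then remains to identify $\dcentre{\sigma_C}(0)$ with the midpoint of the interval hull of $C$. First I would observe that $\sigma_C$, being positively homogeneous, equals its own directional derivative at $0$: $\dd{\sigma_C}{0}{d}=\sigma_C(d)$ for every $d\in\reals^2$. Hence, for each $i\in\{1,2\}$,
\[
\dcentrei{i}{\sigma_C}(0)
= \tfrac{1}{2}\bigl(\sigma_C(\vCoord{i}) - \sigma_C(-\vCoord{i})\bigr)
= \tfrac{1}{2}\Bigl(\sup_{x\in C} x_i + \inf_{x\in C} x_i\Bigr).
\]
Next I would note that if the interval hull of $C$ is written as $\{x:a\leq x\leq b\}$, then by definition $b_i=\sup_{x\in C}x_i$ and $a_i=\inf_{x\in C}x_i$ for each $i$ (these suprema/infima are attained by compactness), so the displayed quantity is exactly $\tfrac{1}{2}(a_i+b_i)$, the $i$-th component of the midpoint. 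Combining the two displays yields $\dcentre{\sigma_C}(0)=\tfrac{1}{2}(a+b)\in C$, which is the claim.

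I expect the main (and really the only) obstacle to be bookkeeping rather than anything deep: one must be careful that $\sigma_C$ is genuinely real-valued on all of $\reals^2$ (which needs boundedness of $C$, hence compactness), that $\genJac{\sigma_C}(0)=C$ requires $C$ to be closed and convex (both hypotheses are in place), and that the interval hull of a compact set has its endpoints given precisely by the coordinatewise extrema. None of these requires new work — each is a standard fact citable from~\cite{HiriartUrrutyLemarechal} or~\cite{Neumaier} — so the proof is short: invoke \cref{cor:compassConvex} on $\sigma_C$, compute the compass difference explicitly via positive homogeneity, and recognize the result as the interval-hull midpoint.
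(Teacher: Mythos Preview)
Your proposal is correct and follows essentially the same approach as the paper: apply \cref{cor:compassConvex} to the support function $\sigma_C$, use positive homogeneity to identify $\dd{\sigma_C}{0}{d}=\sigma_C(d)$ and hence compute $\dcentre{\sigma_C}(0)$ explicitly, and recognize the result as the midpoint of the interval hull via $\genJac{\sigma_C}(0)=C$. The paper cites \cite[Section~VI, Example~3.1]{HiriartUrrutyLemarechal} for the two key facts about $\sigma_C$, but otherwise the argument is the same as yours.
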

\begin{proof}
	The interval hull of $C$ may be expressed in terms of the support
	function $\sigma_C$ as:
	\[
	\{x\in\reals^2:
	\quad -\sigma_C(-1,0)\leq x_1\leq \sigma_C(1,0),
	\quad -\sigma_C(0,-1)\leq x_2\leq \sigma_C(0,1)\};
	\]
	the midpoint of this interval hull is then
	\[
	z\coloneqq  \frac{1}{2}
	\begin{bmatrix}
	\sigma_C(1,0) - \sigma_C(-1,0) \\
	\sigma_C(0,1) - \sigma_C(0,-1)
	\end{bmatrix}.
	\]
	As shown in \cite[Section~VI, Example~3.1]{HiriartUrrutyLemarechal},
	$\sigma_C$ is directionally differentiable at $0$, with
	$\dd{(\sigma_C)}{0}{d}=\sigma_C(d)$ for each $d\in\reals^2$. Thus,
	$\dcentre{\sigma_C}(0)=z$.
	
	Next, \cite[Section~VI, Example~3.1]{HiriartUrrutyLemarechal} also
	shows that $\sigma_C$ is convex, with
	$\genJac{\sigma_C}(0)=C$. Combining these observations with 
	\cref{cor:compassConvex} yields $z=\dcentre{\sigma_C}(0)\in \genJac{\sigma_C}(0)=C$,
	as claimed.
\end{proof}

\section{Examples}
\label{sec:examples}

This section illustrates the main results of this
article.
\cref{sec:counterexamples} motivates the assumptions of
\cref{cor:compassConvex} and \cref{cor:support} by showing how these results could fail if their
assumptions were weakened.
\cref{sec:applications} {uses} compass differences to compute
{individual subgradients} in cases where this was previously difficult or impossible.

\subsection{Counterexamples for related claims}
\label{sec:counterexamples}

The following example shows that, for functions mapping
$\reals^2$ into $\reals$, compass {differences} are not necessarily
elements of either {the regular
	subdifferential~\cite{RockafellarWets},} the lexicographic subdifferential~\cite{Nesterov}, the B-subdifferential~\cite{Scholtes,QiConvergence},
or the Mordukhovich upper subdifferential~\cite{Mordukhovich}.

\begin{example}
	Consider the concave piecewise-linear function:
	\[
	f:\reals^2\to\reals:x\mapsto -|x_1|.
	\]
	Direct computation yields $\dcentre{f}(0,0)=(0,0)$, which is indeed
	an element of $\genJac{f}(0,0)=\{(\lambda,0):-1\leq\lambda\leq
	1\}$. However, the lexicographic subdifferential, the
	B-subdifferential, and the Mordukhovich upper subdifferential of $f$
	at $(0,0)$ are each equal to $\{(-1,0),(1,0)\}$, which does not
	contain $(0,0)$. {The regular subdifferential of $f$ at
		$(0,0)$ is empty.}
\end{example}

The following example shows that {\cref{thm:nonconvex},
	\cref{cor:compassConvex}, and \cref{cor:support}} are minimal in the sense
that, under the respective assumptions of these results, three support
function evaluations are {generally} not sufficient to infer a set element, and
three directional derivative evaluations are generally not
sufficient to infer a function's subgradient. 

\begin{example}
	Suppose that $C\subset\reals^2$ is the unit ball
	$\{x\in\reals^2:\|x\|\leq 1\}$, which has the constant support function
	$\sigma_C:d\mapsto 1$. Consider three nonzero points $u,v,w\in\reals^2$ in
	general position. From the support function's definition, if we
	{did not know the set} $C$ but did know that
	$\sigma_C(u)=\sigma_C(v)=\sigma_C(w)=1$, then we could infer that $C$ is
	a subset of the triangle:
	\[
	T\coloneqq \{x\in\reals^2:\innerProd{u}{x}\leq 1, \quad\innerProd{v}{x}\leq
	1,\quad\innerProd{w}{x}\leq 1\}.
	\]
	Denote the three vertices of $T$ as $a,b,c\in\reals^2$, and denote
	the three edges of $T$ as
	\[
	T_1\coloneqq \convHull\{a,b\},
	\qquad T_2\coloneqq \convHull\{b,c\},
	\qquad\text{and}\quad
	T_3\coloneqq \convHull\{a,c\}.
	\]
	Since $\{a,b\}\subset T_1\subset T$, observe that
	\[
	\sigma_C(u)=1=\sigma_T(u)\geq\sigma_{T_1}(u) \geq \innerProd{u}{x}\qquad\forall x\in\{a,b\}.
	\]
	But, since $a,b,c$ are the vertices of the triangle $T$, and since
	one edge of $T$ lies on the line
	$\innerProd{u}{x}=1$, it cannot be  that
	$\innerProd{u}{a}$ and $\innerProd{u}{b}$ are both less than
	1. Hence $\sigma_{T_1}(u)\geq 1$, and so $\sigma_C(u)=\sigma_{T_1}(u)$.
	
	Similar logic shows that $\sigma_{T_i}(u)=\sigma_C(u)$, $\sigma_{T_i}(v)=\sigma_C(v)$,
	and $\sigma_{T_i}(w)=\sigma_C(w)$ for each
	$i\in\{1,2,3\}$. Each $T_i$ is compact and convex, and the intersection $T_1\cap T_2\cap T_3$ is
	empty.  Hence, there is no way to infer an element of $C$ from
	the support function evaluations $\sigma_C(u)$, $\sigma_C(v)$, and
	$\sigma_C(w)$ and the knowledge that $C$ is compact and
	convex; these support function evaluations are consistent with the
	incorrect hypotheses $C=T_1$, $C=T_2$, and $C=T_3$, yet these guesses
	have no point in common.
	
	Similarly, considering the convex Euclidean norm function
	\[
	f:\reals^2\to\reals:x\mapsto \|x\|,
	\]
	it is readily verified that $\genJac{f}(0)=C$. Suppose we know
	nothing about $f$ other than its convexity and the fact that
	$\dd{f}{0}{u}=\dd{f}{0}{v}=\dd{f}{0}{w}=1$. In this case, there is no way to
	infer an element of $\genJac{f}(0)$ from these three directional
	derivatives alone, since for each $i\in\{1,2,3\}$, the functions
	\[
	\phi_i:\reals^2\to\reals:d\mapsto\max\{\innerProd{s}{d}:s\in T_i\}
	\]
	all have the same directional derivatives as $f$ at $0$ in  the
	directions $u$, $v$, and $w$. However, their subdifferentials at $0$
	are the sets $T_i$, which have no point in common.
\end{example}

The following example shows that the results of this article do not
extend directly to functions of more than two variables or sets in
more than two dimensions.

\begin{example}
	\label{ex:no3d}
	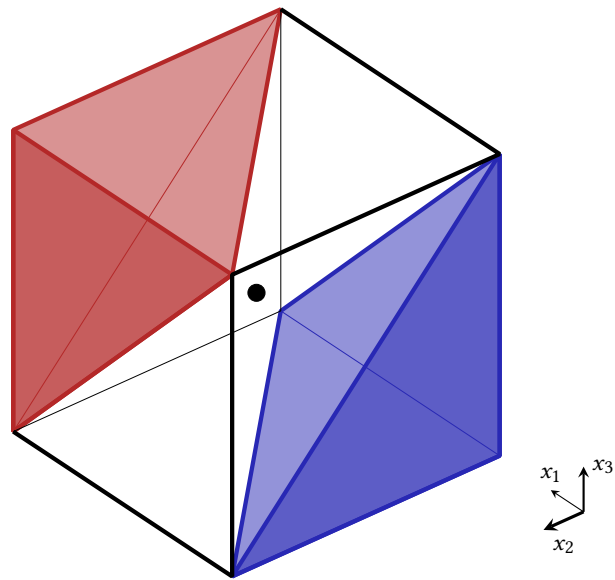
\begin{figure}[t]
		\centering
		\footnotesize
		\begin{tikzpicture}[scale=0.8,
		x={(-1.8cm,1.2cm)}, y={(-2.2cm,-1cm)}, z={(0cm,2.5cm)}]
		\coordinate (A1) at (-1,-1,-1);
		\coordinate (A2) at (1,-1,-1);
		\coordinate (A3) at (-1,1,-1);
		\coordinate (A4) at (-1,-1,1);
		\coordinate (B1) at (1,1,1);
		\coordinate (B2) at (1,1,-1);
		\coordinate (B3) at (1,-1,1);
		\coordinate (B4) at (-1,1,1);
		\coordinate (originL) at (-1.4,-1.3,-1.3);
		
		
		\draw [color=poly1dark, line join=bevel, ultra thick, fill opacity=0.5, fill=poly1dark]
		(A1) -- (A3) -- (A2) -- (A4) -- cycle;
		\draw [color=poly1dark, line join=bevel, ultra thick, fill opacity=0.5, fill=poly1dark]
		(A1) -- (A3) -- (A4) -- cycle;
		\draw [color=poly1dark, very thin] (A1) -- (A2);
		
		\draw [color=poly2dark, line join=bevel, ultra thick, fill opacity=0.5, fill=poly2dark] (B1) -- (B3) -- (B4) -- (B2) -- cycle; 
		\draw [color=poly2dark, line join=bevel, ultra thick, fill opacity=0.5, fill=poly2dark] (B1) -- (B4) -- (B2) --
		cycle;
		\draw [color=poly2dark, very thin] (B2) -- (B3);
		
		\draw [ultra thick, line join=bevel] (B3) -- (A4) --  (B4) -- (A3) -- (B2);
		\draw [very thin, line join=bevel] (B3) -- (A2) -- (B2);
		
		\filldraw (0,0,0) circle (4pt);
		
		\draw [->, >=stealth, thin] (originL) -- ++(0.3,0,0) node[above]
		{$x_1$};
		\draw [->, >=stealth, very thick] (originL) -- ++(0,0.3,0) node[below right] {$x_2$};
		\draw [->, >=stealth, thick] (originL) -- ++(0,0,0.3) node[right] {$x_3$};
		\end{tikzpicture}
		\caption{The disjoint convex compact sets $C_1$ (red) and $C_2$
			(blue) in $\reals^3$ described in \cref{ex:no3d}, and the common midpoint
			(black dot)
			of their interval hulls.}
		\label{fig:3d}
	\end{figure}

	Consider the following convex compact sets in $\reals^3$:
	\begin{align*}
		C_1 &\coloneqq  \convHull\{(1,1,-1),(-1,1,1),(1,-1,1),(1,1,1)\}, \\
		\text{and}\quad
		C_2 &\coloneqq  \convHull\{(1,-1,-1),(-1,1,-1),(-1,-1,1),(-1,-1,-1)\}.
	\end{align*}
	These sets are illustrated in \cref{fig:3d}. They are
	disjoint; for any $x\in C_1$ and $y\in C_2$, and  with
	$e\coloneqq (1,1,1)\in\reals^3$, observe that
	\[
	\innerProd{e}{x} \geq 1 > -1 \geq \innerProd{e}{y}.
	\]
	However, it is readily verified that both $C_1$ and $C_2$ have the
	interval hull $[-1,1]^3$, whose midpoint is $(0,0,0)$, which is in
	neither $C_1$ nor $C_2$. Thus, \cref{cor:support} does not
	extend immediately to $\reals^3$.
	
	Similarly, consider the following two convex piecewise-linear functions:
	\begin{align*}
		f:\reals^3\to\reals: 
		x&\mapsto \max\{x_1 + x_2 - x_3,\quad  x_2 + x_3 - x_1,\quad x_3 + x_1 -
		x_2\}, \\
		\phi:\reals^3\to\reals: 
		x&\mapsto \max\{x_1 - x_2 - x_3, \quad x_2 - x_3 - x_1,\quad  x_3 - x_1 -
		x_2\}.
	\end{align*}
	According to \cite[Proposition~4.3.1]{Scholtes},
	$\genJac{f}(0)\subset\convHull\{(1,1,-1),(-1,1,1),(1,-1,1)\}\subset C_1$, and
	$\genJac{\phi}(0)\subset\convHull\{(1,-1,-1),(-1,1,-1),(-1,-1,1)\}\subset
	C_2$. Thus, the subdifferentials $\genJac{f}(0)$ and
	$\genJac{\phi}(0)$ are disjoint.
	Moreover, it is readily verified that:
	\[
	1 = \dd{f}{0}{s\vCoord{i}} = \dd{\phi}{0}{s\vCoord{i}},
	\qquad\forall s\in\{-1,+1\},
	\quad\forall i \in\{1,2,3\}.
	\]
	Thus, the functions $f$ and $\phi$ cannot
	be distinguished based on their directional derivatives at $0$ in
	any coordinate direction or negative coordinate
	direction, and  $\dcentre{f}(0)=\dcentre{\phi}(0)=0$, but the two functions'
	subdifferentials at $0$ are disjoint. This shows that
	\cref{thm:nonconvex} and \cref{cor:compassConvex} do not extend
	immediately to functions of three variables.
\end{example}

The following example illustrates that the assumption in
\cref{cor:support} that $C$ is closed is crucial.

\begin{example}
	\label{ex:noOpen}
	
	
	
	
	
	Consider the convex set:
	\[
	C\coloneqq \{x\in\reals^2: -1<x_1, \quad x_2<1, \quad x_1<x_2\}.
	\]
	Observe that $C$ is not closed, and that the interval hull of $C$ is
	$[-1,1]^2$. The midpoint of this hull is $(0,0)$, which is
	not an element of~$C$.
\end{example}

\subsection{Applications}
\label{sec:applications}

\subsubsection{Solutions of parametric differential equations}

This section applies \cref{thm:nonconvex} to describe correct
{single subgradients} for solutions of parametric ordinary differential equations
(ODEs) with parameters in $\reals^2$. This
approach reduces to the classical ODE sensitivity approach of
\cite[Section~V, Theorem~3.1]{Hartman1964} when the original ODE is
defined in terms of smooth functions. Unlike existing methods~\cite{KhanBartonPlenary} for
generalized derivative evaluation for these systems, the approach of
this article describes {a subgradient} in terms of auxiliary ODE systems
that can be integrated numerically using off-the-shelf ODE solvers,
but is of course restricted to systems with two parameters.

We consider the following setup, which is readily adapted to other ODE representations.

\begin{assumption} \label{asu1}
	Consider functions $f:\reals^n\to\reals^n$,
	$x_0:\reals^2\to\reals^n$, and $g:\reals^2\times\reals^n\to\reals$ that are locally Lipschitz continuous and
	directionally differentiable. For some scalar $t_f>0$, let
	$x:[0,t_f]\times \reals^2$ be defined so that, for each
	$p\in\reals^2$, $x(\cdot,p)$ solves the following ODE system uniquely:
	
	\[
	\frac{dx}{dt}(t,p)=f(x(t,p)),\quad x(0,p)=x_0(p).
	\]
	
	Define $\phi:\reals^2\to\reals$ to be the cost function:
	\[
	\phi:p \mapsto g(p,x(t_f,p)).
	\]
	
\end{assumption}

Under this assumption, {a subgradient} for $\phi$ may be computed by
combining the results of this article with directional derivatives
described by \cite[Theorem~7]{PangStewart} as follows. If it is desired for the
ODE right-hand-side to depend explicitly on $t$, then an alternative
directional derivative result \cite[Theorem~4.1]{KhanBartonPlenary}
may be used instead.

\begin{proposition}\label{theor:1}
	Suppose  that \cref{asu1} holds, and consider some
	particular  $p\in\reals^2$. For each $d\in\reals^2$,
	let $y(\cdot,d)$ denote a solution on $[0,t_f]$ of the following ODE:
	\begin{align}\label{eq:y}
		\frac{dy}{dt}(t,d)=f'(x(t,p);y(t,d))\quad y(0,d)=x_0'(p;d).
	\end{align}
	{Then $y(\cdot,d)$ is in fact the unique solution of this ODE} for each $d\in\reals^2$. Moreover, if
	we define
	\[
	\psi(d)\coloneqq \dd{g}{(p,x(t_f,p))}{(d,y(t_f,d))}
	\]
	for each
	$d\in\reals^2$, then
	\[
	\frac{1}{2}
	\begin{bmatrix}
	\psi(1,0)-\psi(-1,0) \\ \psi(0,1) - \psi(0,-1)
	\end{bmatrix}
	\]
	is an element of $\genJac{\phi}(p)$. 
\end{proposition}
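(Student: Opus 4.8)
The plan is to reduce the proposition to \cref{thm:nonconvex}: I would show that the cost function $\phi$ is both locally Lipschitz continuous and directionally differentiable at $p$ (i.e.\ B-differentiable there), identify each directional derivative $\dd{\phi}{p}{d}$ with $\psi(d)$, and then read off that $\dcentre{\phi}(p)$ coincides with the displayed vector. The first ingredient is the parametric-ODE directional-derivative result \cite[Theorem~7]{PangStewart} (whose hypotheses are met by \cref{asu1}). Under \cref{asu1}, the right-hand side $y\mapsto\dd{f}{x(t,p)}{y}$ of \eqref{eq:y} is positively homogeneous and Lipschitz in $y$, inheriting a local Lipschitz constant from $f$, so \eqref{eq:y} has a unique solution $y(\cdot,d)$ on $[0,t_f]$ for each $d\in\reals^2$ by Picard--Lindel\"of; this establishes the first assertion. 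The same theorem shows that $q\mapsto x(t_f,q)$ is directionally differentiable at $p$ with $\dd{[x(t_f,\cdot)]}{p}{d}=y(t_f,d)$, since the dependence of the ODE on $q$ enters only through the B-differentiable initial condition $x_0$.

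Next I would establish that $\phi$ is B-differentiable at $p$. Writing $\phi=g\circ G$ with $G:\reals^2\to\reals^2\times\reals^n$, $q\mapsto(q,x(t_f,q))$, the inner map $G$ is directionally differentiable at $p$ with $\dd{G}{p}{d}=(d,y(t_f,d))$, and the chain rule for directional derivatives \cite[Theorem~3.1.1]{Scholtes} then gives $\dd{\phi}{p}{d}=\dd{g}{(p,x(t_f,p))}{(d,y(t_f,d))}=\psi(d)$ for every $d\in\reals^2$; here the local Lipschitz continuity of $g$ is what upgrades its directional derivative to a Hadamard directional derivative, which is what allows it to be composed with the merely directionally differentiable map $G$. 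For local Lipschitz continuity of $\phi$ near $p$, I would combine local Lipschitz continuity of $g$ with that of $q\mapsto x(t_f,q)$; the latter is classical ODE theory, since continuous dependence confines nearby trajectories to a common compact set on which $f$ has a single Lipschitz constant, and a Gronwall estimate together with local Lipschitz continuity of $x_0$ then bounds $\|x(t_f,q)-x(t_f,p)\|$ by a multiple of $\|q-p\|$.

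With $\phi$ shown to be B-differentiable at $p$, \cref{thm:nonconvex} yields $\dcentre{\phi}(p)\in\genJac{\phi}(p)$, and since $\dd{\phi}{p}{d}=\psi(d)$ we have $\dcentrei{i}{\phi}(p)=\tfrac{1}{2}(\dd{\phi}{p}{\vCoord{i}}-\dd{\phi}{p}{-\vCoord{i}})=\tfrac{1}{2}(\psi(\vCoord{i})-\psi(-\vCoord{i}))$ for each $i$; hence $\dcentre{\phi}(p)$ is exactly the displayed vector, which therefore belongs to $\genJac{\phi}(p)$. I expect the only real subtlety to be the chain-rule bookkeeping — making sure the local Lipschitz continuity of $g$ is invoked to justify composing directional derivatives with the nonsmooth inner map $G$, and that the parametric Lipschitz dependence of the ODE solution is cited carefully so the hypotheses of \cref{thm:nonconvex} are genuinely met; neither point is deep, but both are essential.
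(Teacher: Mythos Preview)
Your proposal is correct and follows essentially the same route as the paper: invoke \cite[Theorem~7]{PangStewart} to identify $y(t_f,d)$ with $\dd{[x(t_f,\cdot)]}{p}{d}$, apply the chain rule \cite[Theorem~3.1.1]{Scholtes} to get $\dd{\phi}{p}{d}=\psi(d)$, and then use \cref{thm:nonconvex} (the paper cites the equivalent \cref{cor:nonconvex} with $V=I$). Your write-up is more explicit than the paper's about verifying the Lipschitz hypothesis on $\phi$ and the uniqueness of $y(\cdot,d)$, which is fine and arguably clearer.
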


\begin{proof}
	According to \cite[Theorem~7]{PangStewart}, $y(t,d)$ is the
	directional derivative $\dd{x}{(t,p)}{(0,d)}$ for each $t\in[0,t_f]$
	and $d\in\reals^2$. The result then follows immediately from \cref{cor:nonconvex}
	and the chain rule \cite[Theorem~3.1.1]{Scholtes}.
\end{proof}

If lexicographic derivatives are unavailable for the functions in
\cref{asu1} or do not exist, then \cref{theor:1} is,
to our knowledge, the first method for describing a subgradient of
$\phi$. The following numerical example illustrates this proposition.

\begin{example}
	\label{ex:ode}

	Consider  a function
	$x_0:\reals^2\to\reals^3:p\mapsto(p_1,p_2,p_1)$. For each
	$p\in\reals^2$, let $x(\cdot,p)$ denote the unique solution on
	$[0,1]$ of {the} following parametric ODE system. Here dotted variables
	denote derivatives with respect to $t$.
	\begin{align*}
		\dot{x}_1  &= |x_1|+|x_2|+x_3, \\
		\dot{x}_2&=|x_2|, \\
		\dot{x}_3&=x_3,\\
		x(0,p)&=x_0(p).
	\end{align*}
	Consider a cost function $\phi:p\mapsto x_1(1,p)$. In this case, for
	each $d\in\reals^2$ the ODE~\eqref{eq:y} becomes:
	\begin{align*}
		\dot{y}_1 &=\begin{cases}
			-y_1-y_2+y_3, & \text{if} \ x_1 < 0, \ x_2 < 0,\\
			-y_1+y_2+y_3, & \text{if} \ x_1 < 0, \ x_2 > 0,\\
			-y_1+|y_2|+y_3, & \text{if} \ x_1 < 0, \ x_2 = 0,\\
			y_1-y_2+y_3, & \text{if} \ x_1 > 0, \ x_2 < 0,\\
			y_1+y_2+y_3, & \text{if} \ x_1 > 0, \ x_2 > 0,\\
			y_1+|y_2|+y_3, & \text{if} \ x_1 > 0, \ x_2 = 0,\\
			|y_1|-y_2+y_3, & \text{if} \ x_1 = 0, \ x_2 < 0,\\
			|y_1|+y_2+y_3, & \text{if} \ x_1 = 0, \ x_2 > 0,\\
			|y_1|+|y_2|+y_3, & \text{if} \ x_1 = 0, \ x_2 = 0,\\
		\end{cases} \\
		\dot{y}_2&=\begin{cases}
			y_2, & \text{if} \ x_2 > 0,\\
			|y_2|, & \text{if} \ x_2=0,\\
			-y_2, & \text{if} \ x_2 < 0,\\
		\end{cases} \\
		\dot{y}_3 &=y_3,
	\end{align*}
	with $y(0,d)\equiv(d_1,d_2,d_1)$. In this case $\phi$ is convex. {To
		evaluate a compass difference of $\phi$, the
		numerical variable-step variable-order ODE solver \texttt{ode15s} was used in
		\textsc{Matlab} to evaluate $y$
		numerically, using \textsc{Matlab}'s default precision (on the
		order of 16 significant digits) for arithmetic, and using respective local
		absolute and relative tolerances of $10^{-6}$ and $10^{-3}$ for each
		integration step. Thus, to within the corresponding computational
		error, we obtained $\dcentre{\phi}(0)\approx (3.490,0.772)=:s$, and 
		\cref{theor:1} yields $\dcentre{\phi}(0)\in\genJac{\phi}(0)$. \cref{fig:ode}
		shows that $s$ does indeed appear to satisfy~\eqref{eq:subtangent},
		and does thereby appear to be a subgradient of $\phi$ at $0$ to within
		numerical precision.}	
            \end{example}

            	\begin{figure}[t]
		\centering
		\includegraphics[width=0.5\textwidth]{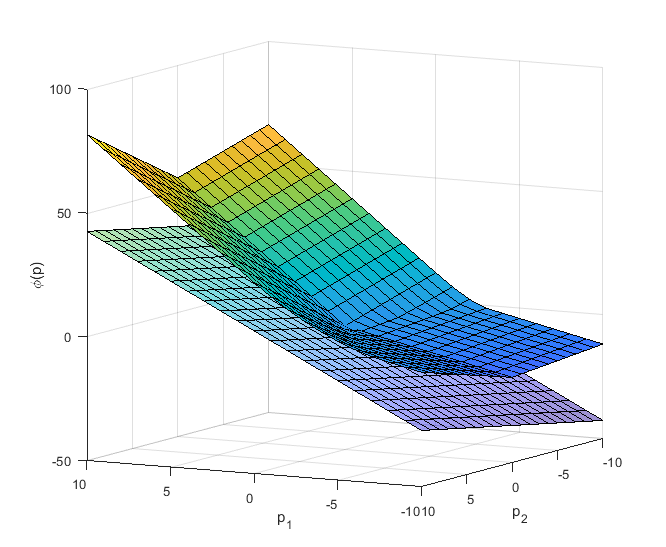}
		\caption{Plot of the function $\phi$ (top) described in
			\cref{ex:ode}, which {appears to dominate} the approximation
			$p\mapsto \phi(0)+\innerProd{s}{p}$ (bottom) based on the computed
			{compass difference $s$ of $\phi$ at~$0$}.}
		\label{fig:ode}
	\end{figure}

\subsubsection{Optimal-value functions}

A well-known result by Danskin \cite[Theorem~1]{Danskin} describes
directional derivatives for certain optimal-value functions, and has
been extended to a variety of settings
(e.g.~\cite{Hogan,BertsekasBook}). {The following proposition
	and its proof are} intended to show how any of
these results may be combined with \cref{thm:nonconvex} or \cref{cor:compassConvex} 
to describe {a subgradient in
	each case}.

\begin{proposition}
	Consider a compact set $C\subset\reals^n$, some open superset $Z$ of
	$C$, and a continuously differentiable function $f:\reals^2\times Z\to \reals$. Define an
	optimal-value function $\phi:\reals^2\to\reals$ for which
	\[
	\phi:x\mapsto\min\{f(x,y):y\in C\}.
	\]
	For some particular $\hat{x}\in\reals^2$, define the following:
	\begin{itemize}
		\item a set $Y\coloneqq \{\hat{y}\in C: f(\hat{x},\hat{y})\leq f(\hat{x},y),
		\quad\forall y\in C\}$,
		\item for each $d\in\reals^2$, a point $\psi(d)\coloneqq \min\{\innerProd{d}{\nabla_xf(\hat{x},y)}:y\in Y\}$.
	\end{itemize}
	Then $\phi$ is locally Lipschitz continuous and directionally
	differentiable, and
	\[
	\frac{1}{2}
	\begin{bmatrix}
	\psi(1,0) - \psi(-1,0) \\ \psi(0,1) - \psi(0,-1)
	\end{bmatrix}
	\]
	is an element of $\genJac{\phi}(\hat{x})$.
\end{proposition}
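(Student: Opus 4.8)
The plan is to recognize the mapping $\psi$ as the directional-derivative mapping $d\mapsto\dd{\phi}{\hat{x}}{d}$ by invoking a classical Danskin-type theorem, and then to apply \cref{thm:nonconvex} essentially verbatim. First I would establish that $\phi$ is locally Lipschitz continuous near $\hat{x}$: since $f$ is continuously differentiable on $\reals^2\times Z$ and $C\subset Z$ is compact, the partial gradient $\nabla_x f$ is bounded, say by some $L>0$, on a set $N\times C$ for a suitable compact neighborhood $N$ of $\hat{x}$. The mean-value theorem then gives $|f(x,y)-f(x',y)|\leq L\|x-x'\|$ for all $x,x'\in N$ and $y\in C$, and the elementary estimate $|\min_{y\in C}a(y)-\min_{y\in C}b(y)|\leq\sup_{y\in C}|a(y)-b(y)|$ yields $|\phi(x)-\phi(x')|\leq L\|x-x'\|$. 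Hence $\phi$ is locally Lipschitz continuous.

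Next I would invoke Danskin's theorem \cite[Theorem~1]{Danskin} (or one of its standard extensions, e.g.~\cite{Hogan,BertsekasBook}): because $f$ and $\nabla_x f$ are continuous and $C$ is compact, $\phi$ is directionally differentiable at $\hat{x}$, with $\dd{\phi}{\hat{x}}{d}=\min\{\innerProd{d}{\nabla_x f(\hat{x},y)}:y\in Y\}=\psi(d)$ for each $d\in\reals^2$, where $Y$ is exactly the set of minimizers of $f(\hat{x},\cdot)$ over $C$ appearing in the statement. In particular, the compass difference of $\phi$ at $\hat{x}$ is exactly
\[
\dcentre{\phi}(\hat{x}) = \frac{1}{2}
\begin{bmatrix}
\psi(1,0) - \psi(-1,0) \\ \psi(0,1) - \psi(0,-1)
\end{bmatrix}.
\]
Since $\phi:\reals^2\to\reals$ is now known to be locally Lipschitz continuous and directionally differentiable at $\hat{x}$, \cref{thm:nonconvex} applies to give $\dcentre{\phi}(\hat{x})\in\genJac{\phi}(\hat{x})$, which is the claimed result.

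I expect the main obstacle to be purely bookkeeping rather than anything deep: one must check that the hypotheses of the chosen Danskin-type result match the present setup precisely, and in particular that the directional-derivative formula it produces coincides with $\psi$ exactly as written (a minimum over $Y$, consistent with $\phi$ being defined through a minimization), together with the local Lipschitz bound above. Once the identity $\psi=\dd{\phi}{\hat{x}}{\cdot}$ is in hand, the conclusion is an immediate instance of \cref{thm:nonconvex}; no separate convexity argument is needed, and indeed $\phi$ need not be convex here, so \cref{cor:compassConvex} would only apply in the special case where $f(\cdot,y)$ is convex for each $y\in C$.
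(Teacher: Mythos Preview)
Your proposal is correct and follows essentially the same approach as the paper: establish that $\phi$ is locally Lipschitz and directionally differentiable with $\dd{\phi}{\hat{x}}{\cdot}=\psi$ via Danskin's theorem, then apply \cref{thm:nonconvex}. The only difference is cosmetic---the paper cites a reference for local Lipschitz continuity rather than giving your direct gradient-bound argument.
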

\begin{proof}
	The optimal-value function $\phi$ has already been established to be locally Lipschitz
	continuous~\cite[Theorem~2.1]{DempMordZem} and directionally
	differentiable~\cite{Danskin},
	with directional derivatives given by $\dd{\phi}{\hat{x}}{d}
	= \psi(d)$ for each $d\in\reals^2$. The claimed result  then
	follows immediately from {\cref{thm:nonconvex}}.
\end{proof}

Observe that, unlike several established sensitivity results for
optimal-value
functions~\cite{BonnansShapiroReview,BonnansShapiroBook}, the above
result does not require second-order sufficient optimality conditions
to hold, and does not require unique solutions of the optimization
problems defining~$\phi$.
An analogous approach describes subgradients of the
Tsoukalas-Mitsos convex relaxations~\cite{Tsoukalas} of composite
functions of two variables; the Tsoukalas-Mitsos approach is based
entirely on analogous
optimal-value functions.

\section{Conclusion}
\label{sec:conclusion}

{For a bivariate nonsmooth function under minimal
	assumptions, the compass difference introduced in this article is guaranteed
	to be a
	subgradient and may be computed using four calls to a
	directional-derivative evaluation oracle. This remains true for
	nonconvex functions, with the ``subgradient'' understood in this case to
	be an element of Clarke's generalized gradient. Thus, for such
	functions, centered finite differences will necessarily converge to
	a subgradient as the perturbation width tends to zero.} The
presented examples  show that this new relationship between directional
derivatives and subgradients may be useful for functions of two
variables, {and may in some cases provide the only known way to
	evaluate a subgradient,} but does not extend directly to functions of three or more
variables. Such a nontrivial extension represents a possible avenue
for future work.


\bibliographystyle{jnsao}
\bibliography{references}

\end{document}